\def \C {\mathbb{C}}
\def \F {{\mathcal F}}
\newtheorem{proposition}{Proposition}[section]
\newtheorem{definition}[proposition]{Definition}
\newtheorem{cor}[proposition]{Corollary}
\newtheorem{lema}[proposition]{Lemma}
\newtheorem{teo}[proposition]{Theorem}
\newtheorem{example}[proposition]{Example}
\newtheorem{re}[proposition]{Remark}
\begin{document}

\title[ Classification  of  holomorphic Pfaff systems  on Hopf manifolds]
{Classification  of  holomorphic Pfaff systems  on Hopf manifolds}

\author{Maur\'icio Corr\^ea*}
\address{Maur\'icio Corr\^ea \\ ICEx - UFMG \\
Departamento de Matem\'atica \\
Av. Ant\^onio Carlos 6627 \\
30123-970 Belo Horizonte MG, Brazil } \email{mauriciojr@ufmg.br}
\thanks{*Corresponding author.}

\author{Antonio M. Ferreira}
\address{ Antonio M. Ferreira \\ DEX - UFLA , Av P.H. Rolfs, s/n, Campus Universit\'ario, Lavras MG, Brazil, CEP 37200-000}
\email{antoniosilva@dex.ufla.br}

\author{Misha Verbitsky}
\address{ Misha Verbitsky \\ IMPA-Instituto Nacional de Matem\'atica Pura e Aplicada, Estrada Dona Castorina, 110,
Jardim Bot\^anico, CEP 22460-320, Rio de Janeiro, RJ -Brazil\\
 and \\
 Laboratory of Algebraic Geometry, \\
  National Research University Higher School of Economics,\\
 6 Usacheva Str., Moscow, Russia.\\ Misha Verbitsky is supported by
 Russian Academic Excellence Project '5-100'',
 CNPq - Process 313608/2017-2, and FAPERJ E-26/202.912/2018.
}
\email{verbit@impa.br}

\subjclass[2010]{Primary 32S65, 37F75, 32M25} \keywords{Pfaff systems,  Holomorphic distributions and foliations, Hopf manifolds}

\begin{abstract}
We classify  holomorphic Pfaff systems (possibly non locally decomposable)   on
certain Hopf manifolds. As  consequence,  we  prove  some   integrability results. 
We also prove that any holomorphic distribution on 
a general (non-resonance) Hopf manifold  is integrable. 
\end{abstract}
\maketitle
 
\section{Introduction}
Let $W=\mathbb{C}^n-\{0\}$, $ n\geq 2$ and $f(z_1, z_2,\dots ,
z_n)=(\mu_1 z_1,\mu_2 z_2,\dots ,\mu_n z_n)$ be a diagonal contraction 
in $\mathbb{C}^n$, where $0<|\mu_i|<1$ for
all $1\leq i\leq n$. The  quotient space
$X=W/\langle f\rangle  $ is a compact complex manifold of dimension $n$ called a diagonal 
Hopf manifold. When $\mu_1=\dots=\mu_{n}$ we say that $X$ is a classical Hopf manifold.
Classical Hopf manifolds were first studied by Heinz Hopf \cite{hopf} in 1948. Hopf showed that $X$ is diffeomorphic to the product of odd spheres $S^1\times S^{2n-1}$ and has
a complex structure which is not-K\"ahler. K. Kodaira \cite{_Kodaira:surf_II_} classified  Hopf surfaces. The   geometry  of  Hopf 
manifolds have been studied by several authors, see for instance, Dabrowski \cite{Da}, Haefliger \cite{Hae}, Kato \cite{_Kato:subvarieties_}, Ise \cite{Ise}, Wehler\cite{weh} etc.

A  holomorphic singular  Pfaff system  $\F$, of  codimension $k$, on $X$ is a non-trivial section $ \omega \in \mathrm H^0(X,\Omega_X^k\otimes  \mathcal{L})$,  where $ \mathcal{L}$ is a  holomorphic line bundle on $X$. 
In this work we will study the geometry and classification of    holomorphic Pfaff systems on Hopf manifolds.

 We will consider  the following types of Hopf manifolds: \footnote{ In the work \cite{aca}  the authors have used other terminology. }  

\begin{definition}  
We say that
\begin{enumerate}
\item $X$ is \textbf{ classical } if $\mu=\mu_1=\cdots=\mu_n$.
\item $X$ is \textbf{ no-resonance} if  $0<|\mu_1|\leq|\mu_2|\leq\ldots\leq|\mu_n|<1$
 and there is no  non-trivial relation among the $\mu_i$'s in this way
$$\prod_{i\in A}\mu^{r_{i}}_{i}=\prod_{j\in B}\mu^{r_{j}}_{j},\quad r_i,r_j
\in\mathbb{N},\quad A\cap B=\emptyset,\quad A\cup B=\{1,2,\ldots,n\}.$$

\item $X$ is \textbf{ weak no-resonance } if $\mu_1=\mu_2=\cdots=\mu_r$, where 
$2\leq r\leq n-1$ and there is no  non-trivial relation among the $\mu_i$'s in this way
$$\prod_{i\in A}\mu^{r_{i}}_{i}=\prod_{j\in B}\mu^{r_{j}}_{j},
\quad r_i,r_j\in\mathbb{N},\quad A\cap B=\emptyset,\quad A\cup B=\{1,r+1,\ldots,n\}.$$
\end{enumerate}
\end{definition}

\begin{re} Note that a general Hopf manifold is always no-resonance.
Indeed, each of the algebraic conditions $\prod_{i\in A}\mu^{r_{i}}_{i}=\prod_{j\in B}\mu^{r_{j}}_{j}$
gives a closed algebraic subvariety of positive codimension 
in the space of all operators $A$. Therefore, the set of no-resonance
Hopf manifold is obtained as a complement to a countable union
of such subvarieties.
\end{re}

D. Mall in \cite{Ma1} used the Kodaira's classification of Hopf surfaces 
to obtain a complete description of  regular  holomorphic foliations on 
Hopf surfaces. E. Ghys in \cite{ghys},  studied 
regular holomorphic foliations on homogeneous spaces, and as a consequence obtained the classification of codimension one foliation on classical Hopf manifolds.

In \cite{aca} M.  Corr\^ea, A. Fern\'andez-P\'erez,  and A. M. Ferreira   classified regular holomorphic  foliations of dimension and codimension one on certain Hopf manifolds.  
As a consequence of the classification   the authors  proved that any  regular 
codimension one distribution on a weak no-resonance or no-resonance Hopf manifold is
integrable and admits a holomorphic first integral.

 M. Ise proved in  \cite{Ise} that if $X$ is a  classical Hopf manifold, then a  line bundle $L$ on $X$ is the quotient of  $W\times \mathbb{C}$  by the
operation of a representation of the fundamental group
of  $X$ 
$
  \varrho_{L}:   \pi_1(X)\simeq \mathbb{Z}  \longrightarrow   GL(1,\mathbb{C})  =  \mathbb{C}^*
$ in the following way
 \begin{equation}\label{_character_Equation_}
\begin{array}{ccc}
  W\times \mathbb{C}& \longrightarrow &    W\times \mathbb{C}  \\
 (z,v)& \longmapsto  &     (f(z),  \varrho_{L}(1)v)\
\end{array}
\end{equation}
In \cite{Ma} M. Mall  has generalized this fact for arbitrary Hopf manifolds. 
We will write $L=L_b$ for the line bundle induced by the representation $
\varrho_{L}(\gamma)$ with  $b=\varrho_{L}(1)$. 

\hfill

In this paper we will show   the following:
\begin{teo}\label{teo1}
Let $X$ be a Hopf manifold  and let
$\mathcal{F}$ be a singular  holomorphic Pfaff system   
on $X$, of codimension $k $,    given
by a nonzero twisted
differential $k$-form $\omega\in H^0(X,\Omega^{k}_X\otimes
L_b)$ with coefficients in the line bundle
$L_b$.   Consider the natural projection $\pi: W \to X$. 
Then the following holds:
\begin{itemize}
\item[(i)] If $X$ is classical, then  $b =\mu^{m}$ with  $m 
\in \mathbb{N}$ and $m\geq
k$. Furthermore $\pi^*\omega$ is   a   homogeneous polynomial $k$-form
of  degree  $m-k$. 
\\
\item[(ii)]
 If $X$ is  no-resonance, then  $b =\mu_{1}^{m_1}\dots\mu_{n}^{m_n}$ such that $m_{j}\in\{0,1\}$ for all $j=1,\dots, n$.
Furthermore,  $\pi^*\omega$  is a monomial $k$-form of the type
$$ \sum\limits _{i_1<\dots<i_k} g_{i_1\dots i_k} dz_{i_1}\wedge \dots \wedge dz_{i_k},$$
with
$$
g_{i_1\dots  i_k}(z_1,\dots  ,z_n) =  c_{ m} ^{{i_1\dots  i_k}}  z_{j_1} ^{m_{j_1}}z_{j_2} ^{m_{j_2}}
\dots z_{j_{n-k}}^{m_{j_{n-k}}}  
$$
where $\{j_1,j_2,\dots, j_{n-k}\}  =\{1,\dots, n\} \setminus \{i_1,\dots, i_k\}$. 
\\
\item[(iii)] If $X$ is  weak no-resonance, then $b=\mu^{m}  \mu_{r+1}^{m_{r+1}}\dots \mu_n^{m_n}$ with
$m \in \mathbb{N}$,  and    $m_{j}\in\{0,1\}$ for all $j=r+1,\dots, n$.
Furthermore, $\pi^*\omega$ is a   polinomial  k-form of the type
$$\pi^*\omega=\sum\limits _{i_1<\dots<i_k} g_{i_1\dots i_k} dz_{i_1}\wedge \dots \wedge dz_{i_k},$$ where 
$$
g_{i_1\dots i_k}=z_{j_1} ^{m_{j_1}}\dots z_{j_{n-r-k+s}} ^{m_{j_{n-r-k+s}}} \cdot  
\sum\limits _{ \alpha_1+\dots +\alpha_r=m-s} c_{\alpha}^{i_1\dots  i_k} z_1 ^{\alpha_1}\dots z_r ^{\alpha_r}
$$
such that  $\{j_1,\dots,j_{n-r-k+s}\}=\{r+1, \dots,n\}\setminus \{i_{s+1}, \dots,i_k\}$ and $s\in \{0,\dots,k\}$ is defined   in such a way that  $i_1,\dots, i_s\leq r$ and   $r+1\leq i_{s+1},\dots, i_k .$
\end{itemize}
\end{teo}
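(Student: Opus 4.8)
The plan is to lift $\omega$ to the universal cover $\pi\colon W\to X$ and reduce the classification to a purely multiplicative question about the weights $\mu_1,\dots,\mu_n$. Since $W=\C^n\setminus\{0\}$ is simply connected for $n\ge 2$ (it deformation retracts onto $S^{2n-1}$) and, by Ise and Mall, $L_b$ is the flat line bundle attached to the character $\pi_1(X)\to\C^*$, $1\mapsto b$, of \eqref{_character_Equation_}, the pullback $\pi^*L_b$ is canonically trivial; hence $\pi^*\omega$ is an honest holomorphic $k$-form on $W$, and $H^0(X,\Omega^k_X\otimes L_b)$ is identified, via pullback, with the space of holomorphic $k$-forms $\tilde\omega$ on $W$ satisfying $f^*\tilde\omega=b\,\tilde\omega$. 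By Hartogs ($n\ge 2$) the coefficients of $\tilde\omega=\pi^*\omega$ extend to entire functions on $\C^n$, so we may write $\tilde\omega=\sum_{i_1<\dots<i_k}g_{i_1\dots i_k}\,dz_{i_1}\wedge\dots\wedge dz_{i_k}$ with each $g_I=\sum_{\alpha\in\mathbb{Z}_{\ge 0}^n}c^I_\alpha z^\alpha$ globally convergent. Writing $\mu^\beta:=\mu_1^{\beta_1}\cdots\mu_n^{\beta_n}$ and letting $\mathbf 1_I\in\mathbb{Z}_{\ge 0}^n$ be the indicator of $I$, the relations $f^*dz_i=\mu_i\,dz_i$ and $f^*g_I=g_I(\mu_1z_1,\dots,\mu_nz_n)$ turn $f^*\tilde\omega=b\,\tilde\omega$, after comparing coefficients, into
\begin{equation}\label{_weight_eq_}
c^I_\alpha\ne 0\ \Longrightarrow\ \mu^{\alpha+\mathbf 1_I}=b.
\end{equation}
Everything now follows by reading off \eqref{_weight_eq_} in the three regimes, together with the fact that $\singf=\sing(\omega)$ has codimension $\ge 2$, i.e.\ that the $g_I$ have no common nonconstant factor.

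Suppose $X$ is classical. Then \eqref{_weight_eq_} reads $\mu^{|\alpha|+k}=b$, and since $|\mu|<1$ the integer $m$ with $\mu^m=b$ is unique; thus $b=\mu^m$, every monomial occurring in any $g_I$ has degree $|\alpha|=m-k\ge 0$, so $m\ge k$ and $\pi^*\omega$ is a homogeneous polynomial $k$-form of degree $m-k$. This is (i). Suppose $X$ is no-resonance; this amounts to $\mu_1,\dots,\mu_n$ being multiplicatively independent, so $\beta\mapsto\mu^\beta$ is injective on $\mathbb{Z}^n$. Hence for each $I$ there is at most one $\alpha$ with $\mu^{\alpha+\mathbf 1_I}=b$, so each $g_I$ is a single monomial $c_Iz^{\alpha_I}$; and $\mu^{\alpha_I+\mathbf 1_I}=b=\mu^{\alpha_{I'}+\mathbf 1_{I'}}$ forces $\beta:=\alpha_I+\mathbf 1_I$ to be independent of $I$ over those $I$ with $c_I\ne 0$, whence $b=\mu^\beta$. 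Since the $g_I$ have no common nonconstant factor, $\min\{(\alpha_I)_i:c_I\ne 0\}=0$ for every $i$; as $(\alpha_I)_i=\beta_i-(\mathbf 1_I)_i\ge\beta_i-1$ this forces $\beta_i\le 1$. Putting $m_j:=\beta_j\in\{0,1\}$ we get $b=\mu_1^{m_1}\cdots\mu_n^{m_n}$ and $g_I=c_I\,z^{\beta-\mathbf 1_I}=c_I\prod_{j\notin I}z_j^{m_j}$, which is (ii).

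Suppose $X$ is weak no-resonance, and split each multi-index and each $I$ at $r$: with $s:=|I\cap\{1,\dots,r\}|$ and $\mu_1=\dots=\mu_r=\mu$, equation \eqref{_weight_eq_} becomes $\mu^{p}\prod_{j>r}\mu_j^{\gamma_j}=b$ with $p=\alpha_1+\dots+\alpha_r+s$ and $\gamma_j=\alpha_j+(\mathbf 1_I)_j$ for $j>r$. Weak no-resonance amounts to $\mu,\mu_{r+1},\dots,\mu_n$ being multiplicatively independent, so $p$ and all the $\gamma_j$ are forced to be constants $m,m_{r+1},\dots,m_n\in\mathbb{Z}_{\ge 0}$, independent of $I$ and of the monomial; thus $b=\mu^m\mu_{r+1}^{m_{r+1}}\cdots\mu_n^{m_n}$ with $m\in\N$. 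The no-common-factor condition then forces $m_j\in\{0,1\}$ for $j>r$ exactly as in the previous paragraph, so $\alpha_j=m_j-(\mathbf 1_I)_j$ for $j>r$, while for $i\le r$ the only constraint is $\alpha_1+\dots+\alpha_r=m-s$, the individual block exponents being otherwise free since $\mu_1=\dots=\mu_r$. Reassembling, $g_I$ equals $\prod_{j\in\{r+1,\dots,n\}\setminus\{i_{s+1},\dots,i_k\}}z_j^{m_j}$ times an arbitrary homogeneous polynomial of degree $m-s$ in $z_1,\dots,z_r$ (with $g_I=0$ when $s>m$), which is (iii).

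The step I expect to require the most care is this rigidity argument: one must check that the (weak) no-resonance hypothesis, phrased for relations $\prod_{i\in A}\mu_i^{r_i}=\prod_{j\in B}\mu_j^{r_j}$ with $A\sqcup B$ the full index set (respectively $\{1,r+1,\dots,n\}$), genuinely rules out every coincidence $\mu^\gamma=\mu^{\gamma'}$ with $\gamma\ne\gamma'$ among the relevant variables — which it does, since absent indices may be assigned exponent $0$ — and, in the weak case, one must keep straight which indices belong to the homogeneous block $\{1,\dots,r\}$ and which to the squarefree part $\{r+1,\dots,n\}$. The only other ingredient, that finitely many monomials with unit greatest common divisor have common zero locus of codimension $\ge 2$, is elementary, and with it (i)--(iii) drop out of \eqref{_weight_eq_}.
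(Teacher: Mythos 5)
Your proposal is correct and follows essentially the same route as the paper: lift $\omega$ to the equivariant $k$-form $\pi^*\omega$ on $W$, extend its coefficients by Hartogs, compare Taylor coefficients to get the weight equation $\mu^{\alpha+\mathbf 1_I}=b$, and then read off the three cases using (weak) no-resonance together with the codimension-$\ge 2$ condition on the singular set to force the exponents into $\{0,1\}$. The only cosmetic difference is that you derive the admissible values of $b$ directly from the weight equation instead of quoting the paper's separate Lemma on $\dim H^0(X,\Omega_X^k\otimes L_b)$, which amounts to the same computation.
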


\hfill

The part $i)$ of the Theorem \ref{teo1} generalizes the  result in \cite{ghys}  in the  codimension one case and \cite{aca} in the one-dimensional case.

We obtain the following consequence for regular Pfaff system on no-resonance Hopf manifolds. 

\begin{cor}  \label{cor-gen}
Any regular holomorphic Pfaff system of codimension $k$   on a no-resonance Hopf manifold is integrable and   has a  compact leaf.
\end{cor}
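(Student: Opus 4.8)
The plan is to feed the explicit shape of $\pi^{*}\omega$ coming from Theorem~\ref{teo1}(ii) into the regularity hypothesis $\singf=\varnothing$ (equivalently, $\pi^{*}\omega$ vanishes nowhere on $W$): this forces $\pi^{*}\omega$ to be, up to constants, either a constant–coefficient decomposable form $dz_{i_{1}}\wedge\dots\wedge dz_{i_{k}}$, or, when $b=\mu_{1}\cdots\mu_{n}$ and $k=n-1$, the $(n-1)$–form contracted from a linear diagonal vector field; in both situations integrability and a compact leaf are immediate. (We may assume $1\le k\le n-1$, the case $k=n$ being trivial.) As a first step, put $S=\{\,j:m_{j}=1\,\}$, so by part (ii) $\pi^{*}\omega=\sum_{I}c^{I}\bigl(\prod_{j\in S\setminus I}z_{j}\bigr)\,dz_{I}$, with $I=\{i_{1}<\dots<i_{k}\}$ and $b=\prod_{j\in S}\mu_{j}$. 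Comparing the two sides of the descent relation $f^{*}\pi^{*}\omega=b\,\pi^{*}\omega$ term by term yields $\prod_{i\in I\setminus S}\mu_{i}=1$ for each term with $c^{I}\ne 0$; since $0<|\mu_{i}|<1$ this forces $I\subseteq S$, and in particular $\pi^{*}\omega\ne 0$ gives $|S|\ge k$.

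Next I would use regularity to make this rigid. If $|S|<n$, evaluate $\pi^{*}\omega$ at the point $p\in W$ with $p_{j}=0$ for $j\in S$ and $p_{l}=1$ for $l\notin S$ (a non-zero point, since $|S|<n$): every term with $S\setminus I\ne\varnothing$ kills itself at $p$, so $\pi^{*}\omega(p)\ne 0$ is possible only if some surviving $I$ contains $S$, which together with $I\subseteq S$ forces $|S|=k$ and leaves the single term $\pi^{*}\omega=c\,dz_{i_{1}}\wedge\dots\wedge dz_{i_{k}}$ with $\{i_{1},\dots,i_{k}\}=S$ and $c\ne 0$. If instead $|S|=n$, all $m_{j}=1$ and $\pi^{*}\omega=\sum_{|I|=k}c^{I}\bigl(\prod_{j\notin I}z_{j}\bigr)\,dz_{I}$; evaluating at a point whose only non-zero coordinate is $z_{m}$ kills every term unless $\{1,\dots,n\}\setminus I\subseteq\{m\}$, so $n-k\le 1$, hence $k=n-1$, and the same test at each such point shows every coefficient $c^{(l)}$ in $\pi^{*}\omega=\sum_{l}c^{(l)}z_{l}\,dz_{1}\wedge\cdots\wedge\widehat{dz_{l}}\wedge\cdots\wedge dz_{n}$ is non-zero; consequently $\pi^{*}\omega=i_{Z}(dz_{1}\wedge\cdots\wedge dz_{n})$ for the linear diagonal field $Z=\sum_{l}(-1)^{l-1}c^{(l)}z_{l}\,\partial_{z_{l}}$, which vanishes only at the origin.

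It remains to read off integrability and a compact leaf. In the first case $\ker\pi^{*}\omega=\mathrm{span}\{\partial_{z_{l}}:l\notin S\}$ is spanned by commuting coordinate fields, hence involutive; in the second case $\ker\pi^{*}\omega=\C\cdot Z$ is one-dimensional, hence involutive automatically. As $\pi$ is a local biholomorphism, the kernel distribution descends to a regular integrable holomorphic foliation $\F$ on $X$. For the leaf, set $T=S$ in the first case and $T=\{2,\dots,n\}$ (relabelling so $c^{(1)}\ne 0$) in the second; in either case $|T|=k$, and a direct computation identifies the coordinate slice $L_{0}=\{z_{j}=0:j\in T\}\cap W\cong\C^{\,n-k}\setminus\{0\}$ with the leaf of the lifted foliation through any of its points. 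Since $f$ is diagonal, $L_{0}$ is $f$–invariant, so it descends through the covering $\pi$ to the leaf $L_{0}/\langle f\rangle$ of $\F$, which is a Hopf manifold of dimension $n-k$ — an elliptic curve when $n-k=1$ — hence compact.

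The only genuinely fiddly part is the bookkeeping in the middle step: checking that $|S|=k$ and the pair $(|S|,k)=(n,n-1)$ exhaust the non-singular possibilities, and that none of the surviving constants $c^{I}$ can vanish. Once $\pi^{*}\omega$ has been normalized to one of these two model forms, the rest is Frobenius for a $1$–dimensional or a coordinate distribution, together with the elementary remark that the exhibited leaf is an invariant coordinate subspace whose quotient is again a Hopf manifold.
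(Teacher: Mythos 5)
Your proof is correct, and it follows the same overall strategy as the paper (specialize the normal form of $\pi^*\omega$ from Theorem \ref{teo1}(ii) and let regularity force rigidity), but the execution differs in two ways that are worth recording. First, where the paper restricts attention to the axis $V_1=\cap_{i=2}^n\{z_i=0\}$ and splits into cases according to whether $h_1=1$, you index everything by the set $S=\{j: m_j=1\}$ and evaluate $\pi^*\omega$ at a single well-chosen point; this makes the bookkeeping cleaner and yields the dichotomy $|S|=k$ versus $(|S|,k)=(n,n-1)$ in one stroke. Second, the paper explicitly assumes $k\le n-2$ and delegates the codimension $n-1$ case to \cite{aca}, so its final assertion that every regular Pfaff system is induced by a \emph{constant} $k$-form is only valid in that range; your argument treats $k=n-1$ directly and correctly identifies the extra family $\pi^*\omega=i_Z(dz_1\wedge\cdots\wedge dz_n)$ with $Z$ a diagonal linear field with nonzero coefficients, which is regular but not constant, and for which integrability is automatic (one-dimensional kernel) and a compact leaf is an invariant punctured axis descending to an elliptic curve. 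In the main case your identification of the leaf $\{z_j=0: j\in S\}\setminus\{0\}$ and its quotient Hopf manifold agrees with the paper's $W_k/\langle f\rangle$. The only cosmetic remark is that the relation $\prod_{i\in I\setminus S}\mu_i=1$ you derive is already built into the statement of Theorem \ref{teo1}(ii) (the proof there fixes $m_{i_1}=\cdots=m_{i_k}=1$), so that step, while harmless, is redundant.
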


For singular holomorphic distributions we show the following integrability result.

\begin{teo}\label{integrab}
Let $\mathcal{F}$ be 
a singular holomorphic distribution on a   no-resonance   Hopf
manifold  $X$. Then $\mathcal{F}$ is  integrable. 
\end{teo}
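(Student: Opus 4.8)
The plan is to reduce the integrability of a singular holomorphic distribution $\mathcal{D}$ of dimension $d$ on $X$ to a statement about its annihilator Pfaff system, and then apply the classification in Theorem \ref{teo1}(ii). First I would recall that a singular holomorphic distribution $\mathcal{D}$ of dimension $d$ on the $n$-fold $X$ is equivalently described, away from a codimension $\geq 2$ set, by its annihilator: a locally decomposable, locally free twisted $k$-form $\omega\in H^0(X,\Omega^k_X\otimes L_b)$ with $k=n-d$, or dually by a section of $\wedge^d TX\otimes L'$. Integrability of $\mathcal{D}$ (the Frobenius condition $[\mathcal{D},\mathcal{D}]\subseteq\mathcal{D}$) is then equivalent to the analytic condition $\omega\wedge d\omega=0$ in the codimension-one case, and more generally to $d\omega\wedge (\text{contraction})$ vanishing; but the cleanest route is via the dual description: $\mathcal{D}$ is integrable iff the corresponding multivector field $v=v_1\wedge\cdots\wedge v_d$ (a twisted section of $\wedge^d TX$) satisfies the involutivity relation, which can be checked on the pullback $\pi^*v$ on $W=\mathbb{C}^n-\{0\}$.

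The key step is to pull everything back to $W$ via $\pi:W\to X$ and invoke Theorem \ref{teo1}(ii): since $X$ is no-resonance, the annihilating twisted $k$-form $\pi^*\omega$ must be a \emph{monomial} $k$-form of the very rigid shape
$$\pi^*\omega=\sum_{i_1<\dots<i_k} c^{i_1\dots i_k}\, z_{j_1}^{m_{j_1}}\cdots z_{j_{n-k}}^{m_{j_{n-k}}}\, dz_{i_1}\wedge\cdots\wedge dz_{i_k},$$
with each exponent $m_j\in\{0,1\}$ and $\{j_1,\dots,j_{n-k}\}$ the complementary index set. Because every coefficient is (up to a constant) exactly the product of the variables \emph{not} appearing among the $dz$'s, the form $\pi^*\omega$ is — after extracting the common structure — essentially a constant-coefficient combination of the "elementary" decomposable forms $z_J\,dz_I$ where $z_J=\prod_{j\notin I}z_j^{m_j}$. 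I would then compute $d(\pi^*\omega)$ directly: differentiating $z_{j_1}^{m_{j_1}}\cdots z_{j_{n-k}}^{m_{j_{n-k}}}$ produces terms $dz_{j_\ell}$ with $j_\ell\notin\{i_1,\dots,i_k\}$, so each summand of $d(\pi^*\omega)$ is again a monomial $(k+1)$-form whose coefficient is a product of distinct variables with the index $j_\ell$ now removed. The integrability condition (for instance, in the locally decomposable case, $\omega\wedge d\omega=0$, or the full Frobenius-type identity for higher codimension) then reduces to a finite collection of polynomial identities among the constants $c^{i_1\dots i_k}$; and the monomial rigidity forces these to hold automatically, because distinct monomials are linearly independent and the structure of the exponents (each in $\{0,1\}$) prevents the "cross terms" that would obstruct integrability from surviving. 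Concretely, a decomposable monomial $k$-form of this complementary-product type is always of the shape $g\cdot d(\text{something})$ or a wedge of closed $1$-forms $\tfrac{dz_{i}}{z_i}$-type logarithmic generators, so it is integrable on the nose.

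The main obstacle I anticipate is handling the higher-codimension case cleanly: for $k\geq 2$ one must be careful that local decomposability of $\omega$ (which is what makes "$\mathcal{D}$" a genuine distribution rather than a general Pfaff system) is automatic here, and one must use the correct integrability criterion — not merely $\omega\wedge d\omega=0$, but the condition that $d\omega$ lies in the ideal generated by $\omega$, i.e. $d\omega=\eta\wedge\omega$ for some $1$-form $\eta$ locally, or equivalently (by Medeiros' criterion) $i_v i_w d\omega=0$ for all $v,w$ annihilating $\omega$ together with $\omega\wedge d\omega=0$. I would verify this by the explicit monomial computation above, splitting $d(\pi^*\omega)$ into terms and checking that each lies in the exterior ideal generated by the monomials of $\pi^*\omega$; the $\{0,1\}$-exponent constraint is exactly what guarantees that the variable $z_{j_\ell}$ being differentiated still divides a corresponding generator of that ideal. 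An alternative, perhaps slicker, finish: reinterpret the monomial form via the logarithmic derivative, writing $\pi^*\omega = \big(\prod_{j} z_j^{m_j}\big)\cdot\big(\text{constant-coefficient }k\text{-form in the }\tfrac{dz_i}{z_i}\big)$ after clearing, so that $\pi^*\mathcal{D}$ is the kernel of a constant-coefficient logarithmic $k$-form, which is visibly integrable (its leaves are orbits of a linear torus action / level sets of monomial first integrals), and this integrability descends to $X$. I would present the direct computation as the primary argument and mention the logarithmic-first-integral viewpoint as the conceptual explanation.
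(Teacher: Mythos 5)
Your proposal is correct in substance but follows a genuinely different route from the paper. The paper's proof of Theorem \ref{integrab} never invokes Theorem \ref{teo1}: it works directly with the tangent sheaf, shows that the pullback of $T\F$ to $\C^n$ is invariant under the Zariski closure of $\langle f\rangle$ in $GL(n,\C)$ (via density of $f$-finite functions, after Ornea--Verbitsky), observes that the no-resonance condition forces this closure to be the full diagonal torus $(\C^*)^n$, and concludes by Proposition \ref{integral}: a torus-invariant distribution is integrable because on the open orbit it is spanned by commuting invariant vector fields. Your route instead extracts the explicit monomial normal form of the annihilating $k$-form from Lemma \ref{le1}(ii) and Theorem \ref{teo1}(ii); writing $\pi^*\omega=\bigl(\prod_{j\in S}z_j\bigr)\sum_{I\subset S}c_I\bigwedge_{i\in I}\frac{dz_i}{z_i}$ with $S=\{j:m_j=1\}$, the kernel becomes a constant-coefficient, translation-invariant distribution in the coordinates $w_i=\log z_i$ --- which is exactly the torus-invariance the paper obtains abstractly. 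Two cautions. First, your ``primary'' direct computation is under-justified as stated: integrability is \emph{not} automatic for arbitrary constants $c_I$, and the identities do not ``hold automatically'' from monomial rigidity alone --- you must use that $\omega$ is locally decomposable (the Pl\"ucker relations on the $c_I$), and the clean way to do so is precisely the logarithmic-coordinate argument you relegate to a side remark: a constant decomposable $k$-form has constant kernel of rank $n-k$, hence integrable kernel, and integrability then extends across the coordinate hyperplanes and $\mathrm{Sing}(\F)$ because the Frobenius obstruction is holomorphic and vanishes on a dense open set (the paper makes the same continuity remark in Proposition \ref{integral}). So the logarithmic version should be promoted to the main argument. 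Second, you should record that $\det(N_{\F})^*$ is of the form $L_b$ (Mall) and that the induced $k$-form has singular set of codimension at least $2$, so that Theorem \ref{teo1}(ii) genuinely applies. On balance, your approach buys an explicit normal form and monomial first integrals for the leaves; the paper's approach buys independence from the classification theorem and avoids all decomposability bookkeeping by arguing at the level of the ideal sheaf of $T\F$ inside $\mathcal{O}_{TX}$.
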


\subsection*{Acknowledgments}
Maur\'icio Corr\^ea  is supported by the CNPq grant numbers 202374/2018-1, 302075/2015-1, 400821/2016-8. 
Misha Verbitsky  is supported by Russian Academic Excellence Project '5-100'', CNPq - Process 313608/2017-2, and FAPERJ E-26/202.912/2018.
 We would like to thank the
referee for precious comments which improved the presentation of the paper greatly.

\section{Pfaff systems, Distributions and Holomorphic foliations}\label{Pfaff-sec}
Let $X$ be an $n$-dimensional complex manifold.

\begin{definition} 
A  holomorphic singular \textbf{Pfaff system} $\F$, of  codimension $k$, on $X$ is a non-trivial section $ \omega \in \mathrm H^0(X,\Omega_X^k\otimes  \mathcal{L})$,  where $ \mathcal{L}$ is a  holomorphic line bundle on $X$.    The singular set of $\omega$ is defined by  
 $$\mathrm{Sing}(\F):=\mathrm{Sing}(\omega)=\{z\in X; \ \omega(z)=0\}.$$
We suppose that $  \mathrm{cod}( \mathrm{Sing}(\omega))\geq 2$.  We say that  $\omega$ is \textbf{regular} (or non-singular ) if $\mathrm{Sing}(\omega)=\emptyset$.
\end{definition}

\begin{re}
 Sometimes a Pfaff system is defined as a saturated
 rank 1 subsheaf in $\Omega_X^k$. This definition 
 is not equivalent to the one given above because
 of ambiguity the choice of the line bundle $\mathcal{L}$.
 However, for most practical purposes there notions are
 the same.
 \end{re}

Given a Pfaff system $\F$       on $X$ induced by  $ \omega \in \mathrm H^0(X,\Omega_X^k\otimes  \mathcal{L})$, then $\omega$ is determined by the following: 
\begin{itemize}
\item [(i)] a open covering $\{U_{\alpha}\}_{\alpha\in \Lambda}$  of $X$;
\item [(ii)] holomorphic $k$-forms $\omega_{\alpha} \in \Omega^k_{U_{\alpha}}$ satisfying  $
\omega_{\alpha } = h_{\alpha \beta} \omega_{\beta} $ on 
 $U_{\alpha}\cap U_{\beta}\neq \emptyset$,\end{itemize}
\noindent where $h_{\alpha \beta}\in \mathcal{O}(U_{\alpha}\cap U_{\beta})^{\ast}$ determines the cocycle representing $ \mathcal{L}$.  Therefore, the singular set of $\F$ is   $ \cup_{\alpha \in \Lambda }  \{\omega_{\alpha} =0\} $.  For more details about Pfaff systems see  \cite{EK,CJV,CMS}. 

A   \textbf{ distribution} on $X$ of codimension $k$  is a nonzero coherent subsheaf  $\mathcal{G}\subset \Omega_X^1$ of rank $k$ 
 such that $\mathcal{G}$ is saturated in $ \Omega_X^1$ (i.e., $ \Omega_X^1/\mathcal{G}$ is torsion free).
 
 The $k$-th wedge product of the inclusion   $\mathcal{G}\subset \Omega_X^1$ gives rise to a   twisted holomorphic  differential $k$-form 
with coefficients in the line bundle $ \det(\mathcal{G})$. That is, a distribution of codimension $k$ induces a Pfaff system  
$$ \omega \in \mathrm H^0(X,\Omega_X^k\otimes   \det(\mathcal{G}))$$
  of codimension $k$ which is  {\it locally decomposable outside its    singular set}. 
That is,  for each point $p\in X\setminus  \mathrm{Sing}(\omega)$ there exists a neighbourhood  $U$ and
holomorphics $1$-forms $\omega_1, \dots,\omega_k \in H^0(U, \Omega_U^1)$ such that
$$
\omega|_{U}=\omega_1 \wedge \cdots   \wedge \omega_k.
$$
A singular holomorphic distribution $\mathcal{F}$ is a    \textbf{ foliation} if it is   \textbf{ integrable}, i.e,  if
$$
d \omega_i \wedge \omega_1 \wedge \cdots   \wedge \omega_k=0
$$
for all $i=1,\dots,k$.
The tangent sheaf of a distribution $\F$ induced by a twisted form $\omega$ is 
$$
T\F=\{v\in TX;\ i_v\omega=0\}.
$$
The rank of $T\F$ is  $(n-k)$ and by integrability condition we have that    $[T\F,T\F]\subset T\F$.

There is the  following exact
sequence of sheaves 
\[
0 \to T\mathcal F \to TX \to N_{\mathcal F} \to 0 \,
\]
where $ N_{\mathcal F}$ is a torsion free sheaf of rank $k$. Conversely, given a nonzero coherent subsheaf $T\mathcal F  \subset TX$ of rank $k$
 such that $TX/T\F=N_{\F}$ is torsion free defines a distribution $(N_{\F})^*\subset  \Omega_X^1.$

We remark that by construction the tangent bundle of a Hopf manifold $X$ is given
by
$$
TX=\bigoplus_{i=1}^n L_{\mu_i},
$$
where $L_{\mu_i}$ is the tangent bundle of the foliation induced
by the canonical vector field $\frac{\partial}{ \partial z_i}$.

\section{Cohomology of line bundles on Hopf manifolds}
Let $W=\mathbb{C}^n-\{0\}$, $ n\geq 2$ and $f(z_1, z_2,...,
z_n)=(\mu_1 z_1,\mu_2 z_2,...,\mu_n z_n)$ be a diagonal contraction 
in $\mathbb{C}^n$, where $0<|\mu_i|<1$ for
all $1\leq i\leq n$. Consider the associated  Hopf manifold 
$X=W/\langle f \rangle$.

Let $\Omega^{p}_{X}$ be the sheaf of germs of holomorphic $p$-forms
on a Hopf manifold $X$. Denote by $\Omega^{p}_{X}(L_b):=\Omega^{p}_{X}\otimes L_b$,
where $L_b$ is a line bundle determined by a character $b$ as in \eqref{_character_Equation_},
and by $\pi:W\to X$ the natural projection on $X$. 

We will adopt the notation in \cite{Ma}. Consider an open covering $A=\{U_{i}\}$
 of $X$ such that    $U_{i}$  are open and contractible
Stein subsets of $X$ and  $\tilde{U}_{i}:=\pi^{-1}(U_{i})$ is a disjoint union of Stein open sets $\{U_{ij}'\}$ of $W$, each of them isomorphic to $U_i$. 
We have that
$$\displaystyle\tilde{U}_{i}=\bigcup_{r\in\mathbb{Z}}f^{r}(U_{i0}); \ \ \tilde{A}=\{\tilde{U}_{i}\}.$$

\par  Consider  $\varphi\in\Gamma(U_i,\Omega^{p}_{X}(L_b))$.  Then 
$$\tilde{\varphi}=
\pi^{*}(\varphi) \in \Gamma(\tilde{U}_i,\pi^{*}(\Omega^{p}_{X}(L_b)))
\cong\Gamma(\tilde{U}_i,\Omega^{p}_{W}).$$
Denote the map  $p_0=b\cdot id-f^*: H^0 (W, \Omega_W ^p)\rightarrow H^0 (W, \Omega_W ^p)$. We have an exact sequence of C\v{e}ch complexes
\begin{equation}\label{sequencia de Cech}
 \xymatrix{ 0 \ar[r] & \mathcal{C}^\textbf{.}(A, \Omega_{X} ^p(L_b))  \ar[r]^{\pi^*}  & \mathcal{C}^\textbf{.}(\tilde{A}, \Omega^p _W)    \ar[r]^{p_0}  & \mathcal{C}^\textbf{.}(\tilde{A}, \Omega_W ^p)
 \ar[r]& 0  }
\end{equation}

From (\ref{sequencia de Cech}) we derive the long exact sequence of cohomology
\begin{eqnarray*}
 \xymatrix{ 0 \ar[r] &  H^0(X, \Omega_{X} ^p (L_b))  \ar[r]   &  H^0(W, \Omega_W ^p)   \ar[r]^{p_0}   &  H^0(W, \Omega_W ^p) \ar[r]  & H^1(X, \Omega_{X} ^p (L_b))
 \ar[r]&  \cdots}
 \end{eqnarray*}

\hfill

D. Mall proved in  \cite{Ma} the following result:

\begin{teo}[Mall \cite{Ma}] \label{teocoh}
If $X$ is a Hopf manifold, of dimension $n\geq 3$,  and $L_b$ is a line bundle on $X$, then
\begin{eqnarray*}\label{equacao99}
&&\dim H^0(X, \Omega_{X} ^p (L_b))=\dim\,Ker(p_0).
\end{eqnarray*}
\end{teo}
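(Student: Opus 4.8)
The plan is to deduce the statement directly from the long exact cohomology sequence already displayed above it: exactness at the first copy of $H^0(W,\Omega^p_W)$ says precisely that $\pi^*$ maps $H^0(X,\Omega^p_X(L_b))$ isomorphically onto $\mathrm{Ker}(p_0)$. So the only thing that needs genuine justification is that \eqref{sequencia de Cech} really is a short exact sequence of complexes and that its two \v{C}ech complexes compute honest sheaf cohomology. I would therefore organize the argument as: (1) Leray property of the coverings $A$ and $\tilde A$; (2) exactness of \eqref{sequencia de Cech}, whose only nontrivial point is termwise surjectivity of $p_0$; (3) a one-line homological conclusion.

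For (1), I would choose the covering $A=\{U_i\}$ so that every finite intersection $U_{i_0}\cap\dots\cap U_{i_q}$ is Stein (the $U_i$ being contractible Stein by hypothesis); since $\Omega^p_X(L_b)$ is locally free, Cartan's Theorem~B gives $\check H^\bullet(A,\Omega^p_X(L_b))=H^\bullet(X,\Omega^p_X(L_b))$. On $W$, each $\tilde U_i=\pi^{-1}(U_i)=\bigsqcup_{r\in\mathbb Z}f^r(U_{i0})$ is a disjoint union of Stein opens, hence Stein, and the same holds for the finite intersections, so again $\check H^\bullet(\tilde A,\Omega^p_W)=H^\bullet(W,\Omega^p_W)$. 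For (2), note that over $\tilde U_i$ a section of $\Omega^p_W$ is a $\mathbb Z$-indexed family $s=(s^{(r)})$ with $s^{(r)}\in\Omega^p\bigl(f^r(U_{i0})\bigr)$, and since $f$ restricts to a biholomorphism $f^r(U_{i0})\xrightarrow{\ \sim\ }f^{r+1}(U_{i0})$ one has $(p_0 s)^{(r)}=b\,s^{(r)}-f^*\bigl(s^{(r+1)}\bigr)$ with $f^*\colon\Omega^p(f^{r+1}(U_{i0}))\xrightarrow{\ \sim\ }\Omega^p(f^r(U_{i0}))$ an isomorphism; the kernel of $p_0$ is then exactly the $f$-equivariant families, i.e. the image of $\pi^*$, which handles exactness on the left and middle. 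For surjectivity, given $t=(t^{(r)})$ I would pick $s^{(0)}$ arbitrarily and solve the recursion $b\,s^{(r)}-f^*s^{(r+1)}=t^{(r)}$ by $s^{(r+1)}=(f^*)^{-1}(b\,s^{(r)}-t^{(r)})$ for $r\geq 0$ and $s^{(r-1)}=b^{-1}\bigl(t^{(r-1)}+f^*s^{(r)}\bigr)$ for $r\leq 0$ (using $b\neq 0$); each $s^{(r)}$ is holomorphic, so $s\in\Gamma(\tilde U_i,\Omega^p_W)$ and $p_0 s=t$. The identical argument on each finite intersection of the $\tilde U_i$ (again a disjoint union of such $\mathbb Z$-families) gives termwise surjectivity, so \eqref{sequencia de Cech} is a short exact sequence of complexes.

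Passing to cohomology and substituting the identifications from (1) produces exactly the long exact sequence $0\to H^0(X,\Omega^p_X(L_b))\xrightarrow{\pi^*}H^0(W,\Omega^p_W)\xrightarrow{p_0}H^0(W,\Omega^p_W)\to H^1(X,\Omega^p_X(L_b))\to\cdots$, and exactness at the first $H^0(W,\Omega^p_W)$ gives $\dim H^0(X,\Omega^p_X(L_b))=\dim\mathrm{Ker}(p_0)$, as claimed. I expect the only step that is more than bookkeeping to be the control of the $f$-invariant covering needed to run the \v{C}ech machinery — namely arranging $\tilde A$ to be Leray for $\Omega^p_W$ and establishing termwise surjectivity of $p_0$; once that is in place the theorem is purely formal. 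It is worth remarking that for this $H^0$-statement alone the exactness at the very first spot is all that is used, so the hypothesis $n\geq 3$ (which secures the vanishing of higher cohomology of $W$ exploited elsewhere in Mall's analysis) plays no essential role here, and by Hartogs ($n\geq 2$) one even has $H^0(W,\Omega^p_W)\cong H^0(\mathbb C^n,\Omega^p_{\mathbb C^n})$, which is what makes $\mathrm{Ker}(p_0)$ effectively computable in the applications.
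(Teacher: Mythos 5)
Your proposal is correct and follows exactly the route the paper itself sketches: the paper does not prove this theorem (it is quoted from Mall's article \cite{Ma}), but it displays the short exact sequence of \v{C}ech complexes \eqref{sequencia de Cech} and the resulting long exact sequence, and your argument simply supplies the missing verifications (Leray coverings, the twisted $f$-equivariance identifying $\mathrm{Im}\,\pi^*$ with $\mathrm{Ker}\,p_0$, and termwise surjectivity of $p_0$ via the recursion over the disjoint components of $\pi^{-1}(U_I)$), all of which are sound. Your closing observation that $n\geq 3$ is irrelevant for the $H^0$ statement, while Hartogs for $n\geq 2$ is what makes $\mathrm{Ker}(p_0)$ computable, is also accurate.
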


In order to prove Theorem \ref{teo1} we first  prove the following lemma.

\hfill

\begin{lema}\label{le1}
Let $X$ be a classical, no-resonance or   weak no-resonance Hopf manifold of dimension 
$n\geq3$, and let $L_b$ be a line bundle on $X$, with $b\in \mathbb{C}^*$. The following holds:
 \begin{enumerate}
\item[(i)] If $X$ is classical,  then $\dim\,H^0(X,\Omega_X ^k \otimes L_b)>0$ 
if and only if $b=\mu^m$, where $m\in \mathbb{N}, \, m\geq k$.

\item[(ii)] If $X$ is no-resonance, then $\dim\, H^0(X,\Omega_X ^k \otimes L_b)>0$ 
if and only if  $$b=\mu_1 ^{m_1}\mu_2 ^{m_2}\dots \mu_n ^{m_n}$$ where $m_i\in 
\mathbb{N}$ and there exist  $j_1,\dots,j_k\in \{1,\dots  ,n\}$, such that \mbox{$m_{j_1},\dots,m_{j_k}\geq 1$.}

\item[(iii)] If $X$ is weak no-resonance, then  $\dim\,H^0(X,\Omega_X ^k \otimes L_b)>0$ 
if and only if $$b=\mu_1^{m_1} \mu_2^{m_2}\dots \mu_n^{m_n}$$ with
$m_j \in \mathbb{N}$ for all $j=1,\dots ,n$,
$m_1+m_2+\dots +m_r= t$, and there exist $i_1,\dots ,i_{k-t}\geq 
r+1$ such that  $m_{i_1}\geq 1 ,\dots,  m_{i_{k-t}}\geq 1$.
 \\
\end{enumerate}
\end{lema}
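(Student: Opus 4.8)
The strategy is to apply Mall's Theorem~\ref{teocoh}, which reduces computing $\dim H^0(X,\Omega^k_X\otimes L_b)$ to computing $\dim\ker p_0$, where $p_0 = b\cdot\mathrm{id} - f^*$ acts on $H^0(W,\Omega^k_W)$. By Hartogs, $H^0(W,\Omega^k_W)=H^0(\mathbb{C}^n,\Omega^k_W)$, so every global holomorphic $k$-form on $W$ is of the form $\omega=\sum_{i_1<\dots<i_k} g_{i_1\dots i_k}(z)\, dz_{i_1}\wedge\dots\wedge dz_{i_k}$ with the $g_{i_1\dots i_k}$ entire on $\mathbb{C}^n$. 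Expanding each coefficient as a power series $g_{i_1\dots i_k}=\sum_{\alpha\in\mathbb{N}^n} c^{i_1\dots i_k}_\alpha z^\alpha$, one computes
$$
f^*(z^\alpha\, dz_{i_1}\wedge\dots\wedge dz_{i_k}) = \mu^\alpha\,\mu_{i_1}\cdots\mu_{i_k}\; z^\alpha\, dz_{i_1}\wedge\dots\wedge dz_{i_k},
$$
where $\mu^\alpha=\mu_1^{\alpha_1}\cdots\mu_n^{\alpha_n}$. Hence $p_0$ is diagonal in the monomial basis $\{z^\alpha\, dz_{i_1}\wedge\dots\wedge dz_{i_k}\}$, with eigenvalue $b - \mu^\alpha\mu_{i_1}\cdots\mu_{i_k}$ on the basis vector indexed by $(\alpha, \{i_1,\dots,i_k\})$. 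Therefore $\dim\ker p_0>0$ if and only if there exist a multi-index $\alpha\in\mathbb{N}^n$ and indices $i_1<\dots<i_k$ with
$$
b = \mu_1^{\alpha_1+[1\in\{i_\bullet\}]}\cdots\mu_n^{\alpha_n+[n\in\{i_\bullet\}]},
$$
i.e.\ $b=\mu_1^{m_1}\cdots\mu_n^{m_n}$ with $m_j\in\mathbb{N}$ and $m_{i_1},\dots,m_{i_k}\ge 1$ for some $k$-subset. This already gives the ``if and only if'' in case (ii): the condition is exactly that $b=\prod\mu_j^{m_j}$ admits a representation with at least $k$ of the exponents $\ge 1$, and for the no-resonance manifold the exponents $m_j$ are uniquely determined by $b$ (this is where no-resonance enters).

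For the uniqueness used in (ii) and (iii), I would argue as follows. If $b=\prod\mu_j^{m_j}=\prod\mu_j^{m_j'}$ are two representations with $m_j,m_j'\in\mathbb{N}$, then setting $A=\{j:m_j>m_j'\}$, $B=\{j:m_j<m_j'\}$ and cancelling common powers yields a relation $\prod_{j\in A}\mu_j^{m_j-m_j'}=\prod_{j\in B}\mu_j^{m_j'-m_j}$ with $A\cap B=\emptyset$. In the no-resonance case, extending $A$ to include (in $B$, say) all remaining indices with exponent $0$ would be forbidden only if $A\cup B=\{1,\dots,n\}$; one must check the degenerate possibilities (some index appears in neither $A$ nor $B$, or $A$ or $B$ is empty) separately, but these force $m_j=m_j'$ for all $j$ directly after re-examining the moduli. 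So the tuple $(m_1,\dots,m_n)$ is determined by $b$, and (ii) follows. Case (i) is the specialization $\mu_1=\dots=\mu_n=\mu$: then the eigenvalue is $b-\mu^{|\alpha|+k}$, so $\ker p_0\ne 0$ iff $b=\mu^m$ with $m=|\alpha|+k\ge k$; and the kernel consists precisely of $k$-forms whose coefficients are homogeneous of degree $m-k$, which also pins down the ``furthermore'' part of Theorem~\ref{teo1}(i) for free.

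For the weak no-resonance case (iii), the coordinates split as $\{1,\dots,r\}$ (all with modulus $\mu$) and $\{r+1,\dots,n\}$. Writing $m_1+\dots+m_r=t$, the contribution of the first block to $\mu^\alpha\mu_{i_1}\cdots\mu_{i_k}$ depends only on the total $t$, so $b=\mu^t\prod_{j>r}\mu_j^{m_j}$, and by the partial no-resonance hypothesis the data $(t, m_{r+1},\dots,m_n)$ is uniquely determined by $b$ (same cancellation argument, now applied with index set $\{1,r+1,\dots,n\}$, treating the first block as a single symbol $\mu$). The requirement that $k$ of the exponents of the full form $z^\alpha\,dz_{i_1\dots i_k}$ be $\ge 1$ translates, after separating how many of the $i_\bullet$ lie in the first block (say $s$ of them) versus the second ($k-s$), into: $t\ge s$ (room for the $s$ indices $\le r$ to have exponent $\ge 1$, achievable since $z_1,\dots,z_r$ are interchangeable for the value of $b$) together with $k-s$ indices among $\{r+1,\dots,n\}$ having $m\ge 1$. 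Taking $s$ maximal shows the stated condition ``there exist $i_1,\dots,i_{k-t}\ge r+1$ with $m_{i_\ell}\ge 1$'' is both necessary and sufficient. The main obstacle is bookkeeping: carefully handling the degenerate sub-cases of the cancellation argument (empty $A$ or $B$, indices absent from $A\cup B$) so that the non-resonance hypothesis genuinely yields uniqueness of the exponent tuple, and correctly matching the combinatorial splitting parameter $s$ in (iii) to the form of the answer; the underlying linear algebra (diagonalizing $p_0$) is routine once the Hartogs reduction and Mall's theorem are invoked.
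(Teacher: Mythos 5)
Your proposal is correct and follows essentially the same route as the paper: invoke Mall's theorem to reduce to $\dim\ker p_0$, use Hartogs to expand the coefficients in Taylor series, observe that $p_0$ is diagonal on the monomial basis with eigenvalue $b-\mu^\alpha\mu_{i_1}\cdots\mu_{i_k}$, and read off the three cases from the (non-)resonance hypotheses. Your treatment of the uniqueness of the exponent tuple and of the splitting parameter $s$ in case (iii) is in fact more careful than the paper's one-line appeal to ``the $\mu_i$'s have no relations.''
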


\begin{proof}
From Theorem \ref{teocoh} we have that  $\dim H^0(X, \Omega_X ^{k}\otimes L_b)=\dim(ker\, p_0)$,  where the map
$$p_0: H^0 (W, \Omega_ W ^{k})\longrightarrow H^0 (W, \Omega_ W ^{k}),$$
is given by   $ p_0=b\cdot 
 id-f^*. $
Let $\omega \in H^0(W,\Omega_W ^{k})$ be.  We  write 
$$\omega= \sum\limits _{i_1<\dots<i_k}  g_{i_1\dots i_k} dz_{i_1}\wedge \cdots \wedge dz_{i_k}.$$ 
It  follows from Hartog's extension theorem that each $g_{i_1\dots i_k}$ 
can be represented by its Taylor series
\begin{center}
$g_{i_1\dots i_k}(z_1,z_2,\dots ,z_n)=\sum\limits _{\alpha \in \mathbb{N}^n} 
c_\alpha ^{i_1\dots i_k} z_1^{\alpha_1}z_2^{\alpha_2}\dots z_n ^{\alpha_n}$, for all $i=1,\dots ,n.$
\end{center}
Hence
\begin{equation}\label{equacao123}
p_0(\omega)=\sum\limits _{i_1<\dots <i_k} \sum\limits _{\alpha \in \mathbb{N}^n}
c_\alpha ^{i_1\dots i_k} (b-\mu_1 ^{\alpha_1}\dots \mu_n ^{\alpha_n}\mu_{i_1}\dots 
\mu_{i_k}) z_1^{\alpha_1} \dots z_{n} ^{\alpha_n} dz_{i_1}\wedge \cdots \wedge dz_{i_k}.
\end{equation}

In the classical case,  $\mu_1=\dots =\mu_n=\mu$ and
thus $\dim(\ker(p_0)) >0$ if, and only if,
$b=\mu^{m}$, for some $m \in \mathbb{N}$, $m\geq k$.
\\

In the no-resonance case, since $\mu_i's$ have no relations, it  follows from (\ref{equacao123}) 
that \\ \mbox{$ \dim(\ker(p_0)) >0$} if, and only if,  $b=\mu_1 ^{m_1} \mu_2^{m_2}\dots \mu_n ^{m_n}$ 
where $m_j \in \mathbb{N}$, and there exist
 $j_1,\dots,j_k\in \{1,\dots  ,n\}$, 
such that \mbox{$m_{j_1},\dots,m_{j_k}\geq 1$.}
\\

Finally, for the weak no-resonance case, we have $\mu_1=\dots =\mu_r=\mu$. Since 
 $\mu, \mu_{r+1},\dots ,\mu_n$ have no relations, we have
$\dim(\ker(p_0))>0$ if, and only if, $b=\mu^m \mu_{r+1}^{m_{r+1}}\dots \mu_n^{m_n}$ such that
$m_j \in \mathbb{N}$ for all $j=1,\dots ,n$, $m_1+m_2+\dots +m_r= t$, and there exist $i_1,\dots ,i_{k-t}\geq 
r+1$ such that $m_{i_1}\geq 1 ,\dots,  m_{i_{k-t}}\geq 1$.

\end{proof}

\subsection{Proof  of Theorem \ref{teo1}}
\begin{proof}
First of all, we may  assume that $k\leq n-2$, since the case when $k= n-1$ was done in \cite{aca}. 
Consider the natural projection $\pi: W \to X$. By construction,  we have that
a holomorphic section $\omega \in H^0(X, \Omega_X ^k \otimes L_{b})$ corresponds to a non-trivial  section
 $$\pi^* \omega=(g_{i_1\dots i_k})_{i_1<\dots < i_k} \in H^0\left(W, \mathcal{O}_W ^{n \choose k}\right),$$
 such that
$g_{i_1\dots i_k} \in \mathcal{O}_W$ satisfies
$$g_{i_1\dots i_k}(\mu_1 z_1,\dots ,\mu_n z_n)= \mu_{i_1}^{-1}\dots \mu_{i_k} ^{-1} 
b  g_{i_1\dots i_k}(z_1,\dots ,z_n),$$ for all ${i_1<\dots < i_k}.$
By Hartog's extension theorem, $\pi^* \omega$ can be represented on $ \mathbb{C}^n$ by its Taylor series
\begin{center}
$g_{i_1\dots  i_k}(z_1,\dots ,z_n)=\sum\limits _{\alpha \in \mathbb{N}^n} c_\alpha 
^{i_1\dots  i_k} z_1^{\alpha_1}\dots  z_n ^{\alpha_n}$, where $\alpha=(\alpha_1,
\alpha_2,\dots ,\alpha_n) \in \mathbb{N}^n$ .
\end{center}
Then
$$ 
c_{\alpha}^{i_1\dots  i_k} \mu_1 ^{\alpha_1} \mu_2 ^{\alpha_2}\dots  \mu_ n ^{\alpha_n}=
c_{\alpha}^{i_1\dots  i_k} \mu_{i_1}^{-1}\dots \mu_{i_k} ^{-1} b,
$$
where  $\alpha =(\alpha_1, \alpha_2, \dots ,\alpha_n)\in \mathbb{N}^n.$ Therefore,   for all  $ c_{\alpha}^{i_1\dots i_k}\neq 0$ we get that  
\begin{equation}\label{equa6}
  \mu_1 ^{\alpha_1} \mu_2 ^{\alpha_2}\dots  \mu_ n ^{\alpha_n}=
  \mu_{i_1}^{-1}\dots \mu_{i_k} ^{-1} b,
 \end{equation}
\\
\textbf{Classical case.} In this case we have that  $\mu_1=\dots = \mu_n=\mu$. Lemma \ref{le1} item $(i)$ implies that
 $b=\mu^{m}$ for some
$m\geq k$.
Therefore, from (\ref{equa6}) we have 
$$
 \mu^{|\alpha|}=  \mu^{-k}\mu^{m}, \  \textrm{ where } 
|\alpha|=\alpha_1+\dots +\alpha_n.
$$
This  implies that   $|\alpha|=m-k$  
 for all  $\alpha =(\alpha_1, \alpha_2, \dots ,\alpha_n)\in \mathbb{N}^n$ and  $i_1<\dots< i_k$ such that   $ c_{\alpha}^{i_1\dots i_k}\neq 0$. It follows that  each $g_{i_1\dots i_k}$ 
is a homogeneous polynomial of degree $m-k$.
\\ \\
\textbf{No-resonance case.} If $X$ is no-resonance, then by Lemma \ref{le1} item $(ii)$ we have
$$b=\mu_1 ^{m_1} \mu_2^{m_2}\dots \mu_n ^{m_n}$$  where $m_i\in \mathbb{N}$ 
and there exist $l_1,\dots,l_k\in \{1,\dots  ,n\}$ 
such that \mbox{$m_{l_1},\dots,m_{l_k}\geq 1$.} So from (\ref{equa6}) we get
\begin{center}
$ 
  \mu_1 ^{\alpha_1} \mu_2 ^{\alpha_2}\dots  \mu_ n ^{\alpha_n}
=  
\mu_{i_1}^{-1}\dots \mu_{i_k} ^{-1}\mu_1 ^{m_1} \mu_2 ^{m_2}\dots \mu_n ^{m_n},$ 
\end{center}
for all  $\alpha =(\alpha_1, \alpha_2, \dots ,\alpha_n)\in \mathbb{N}^n$ and  $i_1<\dots< i_k$ such that  $ c_{\alpha}^{i_1\dots i_k}\neq 0$. 
Since there is not non-trivial relation between  the $\mu_i$'s,  we must have that   $\alpha=\tilde{m}=:(m_1, \dots, m_{i_1}-1,\dots,m_{i_k}-1\dots, m_n)$. Thus 
\begin{eqnarray} \label{generica} \nonumber \\ 
g_{i_1\dots  i_k}(z_1,\dots  ,z_n) &=& c_{\tilde{m}} ^{{i_1\dots  i_k}} z_{1} ^{m_{1}}\dots z_{i_{1}} ^{m_{i_{1}}-1}\dots z_{i_k} ^{m_{i_k}-1} \dots z_n ^{m_n},
\\ \nonumber
\end{eqnarray}
  \noindent 
Therefore the $k$-form $\pi^*\omega$ is  monomial. 

We must  have  that $m_i\in\{0,1\}$, for all $i\in\{1,\cdots, n\}$,
otherwise, we   would have that  $\{z_i=0\}\subset \mathrm{Sing}(\pi^*\omega)$ for some $i$. A contradiction, since the Pfaff system has singular set with codimension $\geq 2$. Then, if the coefficient $g_{i_1\dots  i_k} \not\equiv 0$,  we have  from   (\ref{generica})   that 
\begin{eqnarray}\label{generica2} 
g_{i_1\dots  i_k}(z_1,\dots  ,z_n) &=&   c_{\tilde{m}} ^{{i_1\dots  i_k}}  z_{j_1} ^{m_{j_1}}z_{j_2} ^{m_{j_2}}
\dots z_{j_{n-k}}^{m_{j_{n-k}}}  
\end{eqnarray}
where $\{j_1,j_2,\dots, j_{n-k}\}  =\{1,\dots, n\} \setminus \{i_1,\dots, i_k\}$, and $m_{i_1}=\cdots=m_{i_k}=1$.
\\ 
Now we  will  determine  the regular  $k$-forms. 
\subsubsection{Proof  of Corollary  \ref{cor-gen}}  If $\pi^*\omega=(g_{i_1\dots i_k})_{i_1<\dots < i_k} $ is regular, then  there  exist a  coefficient $g_{h_1\cdots  h_k} $, for some $(h_1,\dots,  h_k)$,   such that  it   is not zero  along  $V_1=\cap_{i=2}^{n} \{z_i=0\}$.  We will consider the following   subcases:
\begin{enumerate}
\item suppose that  $h_1=1$:  
from equation (\ref{generica2}) we have 
\begin{eqnarray*} 
g_{h_1\dots  h_k}(z_1,\dots  ,z_n) &=&   c_{\tilde{m}} ^{{h_1\dots  h_k}}  z_{j_1} ^{m_{j_1}}z_{j_2} ^{m_{j_2}}
\dots z_{j_{n-k}}^{m_{j_{n-k}}}  
\end{eqnarray*}
with $2 \leq j_1\leq \dots \leq j_{n-k}$ and $m_{h_1}=\cdots=m_{h_k}=1$. Since $g_{h_1\cdots  h_k} $  is not zero along  $V_1$, 
we must have that $m_{j_1}=m_{j_2}=\cdots m_{j_{n-k}}=0$. Now, as we know all $m_i's$, from equation \ref{generica2}, we have that the only coefficient  of the $k$-form $\pi^*\omega$ which is not identically zero is
$g_{h_1\dots  h_k}=c_{\tilde{m}} ^{{h_1\dots  h_k}}=constant$. Therefore 
$$\pi^*\omega=c_{\tilde{m}} ^{{h_1\dots  h_k}} dz_{h_1}\wedge dz_{h_2}\wedge\cdots\wedge dz_{h_k}.$$

\item suppose that $h_1 \neq 1$:  
 from equation (\ref{generica2}) we have
\begin{eqnarray*} 
g_{h_1\dots  h_k}(z_1,\dots  ,z_n) &=&   c_{\tilde{m}} ^{{h_1\dots  h_k}}  z_{1} ^{m_1}z_{j_2} ^{m_{j_2}}
\dots z_{j_{n-k}}^{m_{j_{n-k}}}  
\end{eqnarray*} where $\{j_1=1,j_2,\dots, j_{n-k}\}  =\{1,\dots, n\} \setminus \{h_1,\dots, h_k\}$, and $m_{h_1}=\cdots=m_{h_k}=1$.

Since $g_{h_1\cdots  h_k} $  is not zero along  $V_1$, 
we must have $m_{j_2}=\cdots m_{j_{n-k}}=0$.
Now we have two subcases: \\
1a) If $m_1=0$, we are in the previous case $m_{h_1}=\cdots=m_{h_k}=1$ and $m_j=0$ in the other cases, and therefore 
$$\pi^*\omega=Cdz_{h_1}\wedge dz_{h_2}\wedge\cdots\wedge dz_{h_k}$$ 
1b) If $m_1=1$, we have  $m_{h_1}=\cdots=m_{h_k}=1$  and $m_{j_2}=\cdots m_{j_{n-k}}=0$ in the other cases.
Again, as now we know all $m_i's$, from equation \ref{generica2}, we have that the only coefficients  of the $k$-form $\pi^*\omega$ which is not a null monomial is
\begin{equation*} g_{h_1\dots h_k}=C z_1,\,\, g_{1h_2\dots  h_k}=C_1z_{h_1},
g_{1h_1h_3\cdots  h_k}=C_2z_{h_2},\cdots, g_{1h_1h_2\cdots    h_{k-1}}=C_{k}z_{h_k}\end{equation*}
Since $k+1\leq n-1$, the   $k$-form $\pi^*\omega$ which  induces the  Pfaff system    is not regular. A contradiction. 

\end{enumerate}

Therefore any   regular holomorphic Pfaff system on a no-resonance  Hopf manifold  $X=W/\langle f\rangle  $   is induced by a constant  $k$-form of the type
$$\pi^*\omega=Cdz_{i_1}\wedge\dots\wedge dz_{{i_k}},\ C\in \mathbb{C}^*.$$ 
That is, it is   always integrable and has a  compact leaf $W_k/ \langle f\rangle $, where $$W_k=\{z_{i_1}=\dots =z_{{i_k}}=0\}-\{0\}.$$
\\
\textbf{Weak no-resonance case.}
In this case $\mu_1=\dots =\mu_r$, then by Lemma \ref{le1} item $(iii)$ we have 
$$b=\mu  ^{m } \mu_{r+1}^{m_{r+1}}\dots \mu_n ^{m_n}.$$
 Then from (\ref{equa6}) we get
\begin{center}
$  \mu^{\alpha_1+\dots +\alpha_r}\mu_{r+1}^{\alpha_{r+1}} 
\dots  \mu_ n ^{\alpha_n}=  \mu_{i_1}^{-1}\dots \mu_{i_k} ^{-1} 
\mu  ^{m } \mu_{r+1}^{m_{r+1}}\dots \mu_n ^{m_n}, $
\end{center}
for all  $\alpha =(\alpha_1, \alpha_2, \dots ,\alpha_n)\in \mathbb{N}^n$ and  $i_1<\dots< i_k$ such that   $ c_{\alpha}^{i_1\dots i_k}\neq 0$.  Since there is not non-trivial relation between  $\mu,\mu_{r+1},\dots,\mu_n$'s  we have:\\  for  $s\in \{0,\dots,k\}$   such that  $i_1,\dots ,i_{s} \leq r$ and $r+1\leq i_{s+1},\dots, i_k $,  then $$\alpha_1+\dots+\alpha_r= m-s$$  and $(\alpha_{r+1},  \dots,\alpha_n)=(m_{r+1},\dots, m_{i_{s+1}}-1,\dots,m_{i_k}-1,\dots, m_n)$. Therefore
\begin{equation}\label{weak} 
g_{i_1\dots i_k}=
z_{r+1} ^{m_{r+1}}\dots z_{i_{s+1}} ^{m_{i_{s+1}}-1}\dots z_{i_k} ^{m_{i_k}-1} \dots z_n ^{m_n} \cdot\sum\limits _{ \alpha_1+\dots +\alpha_r=m-s} c_{\alpha}^{i_1\dots  i_k} z_1 ^{\alpha_1}\dots 
z_r ^{\alpha_r}\end{equation}

By the same reason in the no-resonance case, we must  have  that $m_i\in\{0,1\}$, for all $i\geq r+1$. Then if the coefficient $g_{i_1\dots  i_k} \not\equiv 0$, we  have    from   (\ref{weak}) that 
\begin{equation}\label{weak2} 
g_{i_1\dots i_k}=z_{j_1} ^{m_{j_1}}\dots z_{j_{n-r-k+s}} ^{m_{j_{n-r-k+s}}} \cdot  
\sum\limits _{ \alpha_1+\dots +\alpha_r=m-s} c_{\alpha}^{i_1\dots  i_k} z_1 ^{\alpha_1}\dots z_r ^{\alpha_r}
\end{equation} 
where $\{j_1,\dots,j_{n-r-k+s}\}=\{r+1, \dots,n\}\setminus \{i_{s+1}, \dots,i_k\}$ and $m_{i_{s+1}}=\cdots=m_{i_k}=1$.

\end{proof}

\subsection{Examples}
\begin{example} Let $X= (\mathbb{C}^5- \{0\})/<\mu>$ be a classical Hopf manifold. The $2$-form  
$$\omega= z_3\,dz_1\wedge dz_2 + z_1 \,dz_1\wedge dz_3+   z_2 \,dz_1\wedge dz_4 +z_5 \,dz_1\wedge dz_5+  z_4 \,dz_4\wedge dz_5$$ induces a regular codimension two Pfaff system  on $X$.

\end{example} 

\subsection{Examples}
\begin{example} Let $X= (\mathbb{C}^5- \{0\})/<\mu_1\cdots \mu_5>$ be a no-resonance  Hopf manifold. The $3$-form  
$$\omega= z_4z_5\,dz_1\wedge dz_2\wedge dz_3 +  z_1z_3\,dz_2\wedge dz_4\wedge dz_5$$ induces a singular   codimension three  Pfaff system  on $X$.

\end{example} 

\begin{example} Let $X= (\mathbb{C}^6-  \{0\})/<\mu \mu \mu\mu_4\mu_5\mu_6>$ be a weak no-resonance  Hopf manifold. The $2$-form  
$$ \omega= z_5z_6 \,dz_1\wedge dz_4+ z_2z_4dz_5\wedge dz_6 $$ 
induces a  singular codimension two Pfaff system  on $X$.

\end{example}

\section{Proof  of  Theorem  \ref{integrab}  }

Let $X$ be a no-resonance  Hopf manifold of dimension $n$.

\hfill

\begin{definition}
Let $\mathcal{F}$ be  a  distribution on a Hopf manifold
$X=\C^n -\{0\}/\langle  f \rangle$ with tangent sheaf $T\mathcal{F}\subset TX$,
and $G\subset GL(n,\C)$ a subgroup.
We say that $\mathcal{F}$ is {\bf $G$-invariant
distribution} if the pullback of $T\mathcal{F}$ to $\C^n$
is invariant with respect to the natural
$G$-action on $\C^n$.
\end{definition}

\hfill

\begin{proposition}\label{integral}
Let $G=(\C^*)^n\subset GL(n,\C)$ be the diagonal
subgroup, and $\F\subset T\C^n$ a $G$-invariant distribution.
Then $\F$ is integrable.
\end{proposition}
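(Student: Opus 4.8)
The plan is to exploit the decomposition of $T\C^n$ into weight spaces for the diagonal torus action and to show that a $G$-invariant saturated subsheaf of $T\C^n$ is generated by coordinate vector fields multiplied by monomials, hence is automatically closed under the Lie bracket. First I would observe that $G=(\C^*)^n$ acts on $\C^n$, hence on the sheaf $\oo^{\oplus n}\cong T\C^n$, and the induced action on global sections $H^0(\C^n,\oo)=\C[z_1,\dots,z_n]$ is diagonalizable with the monomials $z^\alpha=z_1^{\alpha_1}\cdots z_n^{\alpha_n}$ as eigenvectors. Writing a vector field $v=\sum_{i=1}^n a_i(z)\,\partial/\partial z_i$, the basis field $\partial/\partial z_i$ has weight $-e_i$, so $z^\alpha\,\partial/\partial z_i$ is an eigenvector of weight $\alpha-e_i$. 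Thus $T\C^n$ is a direct sum of one-dimensional $G$-weight subspaces (as a module over the semigroup of monomials), and a $G$-invariant coherent subsheaf $\F\subset T\C^n$ decomposes as a sum of such weight pieces: $\F$ is generated, as an $\oo$-module, by a set $S$ of monomial vector fields $z^{\beta}\,\partial/\partial z_i$.

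Next I would use saturatedness. Since $\F$ is saturated in $T\C^n$ (equivalently $T\C^n/\F$ is torsion-free), if $z^{\beta}\,\partial/\partial z_i\in\F$ with $\beta_i\geq 1$ then dividing by the non-zero-divisor $z_i$ shows $z^{\beta-e_i}\,\partial/\partial z_i\in\F$; iterating, we may assume each generator has the form $z^{\beta}\,\partial/\partial z_i$ with $\beta_i=0$, and more generally that the exponent set is "downward closed" in each admissible direction. The key consequence is: for each index $i$ in the support of $\F$, the set of monomials $m$ with $m\,\partial/\partial z_i\in\F$ is a monomial ideal $I_i\subset\C[z]$, and $\F=\bigoplus_i I_i\,\partial/\partial z_i$ (intersected appropriately with $T\C^n$; there is no subtlety since these are already submodules of $T\C^n$).

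Then integrability is a direct bracket computation on generators. Given two generators $u=m\,\partial/\partial z_i$ and $w=m'\,\partial/\partial z_j$ with $m\in I_i$, $m'\in I_j$, one computes
\[
[u,w]=m\,\frac{\partial m'}{\partial z_i}\,\frac{\partial}{\partial z_j}-m'\,\frac{\partial m}{\partial z_j}\,\frac{\partial}{\partial z_i}.
\]
The first term is $\bigl(m\,\partial m'/\partial z_i\bigr)\partial/\partial z_j$; since $I_j$ is an ideal and $\partial m'/\partial z_i$ is a scalar multiple of $m'/z_i$ (or zero), the monomial $m\cdot\partial m'/\partial z_i$ lies in $I_j$ provided it is a polynomial — which it is, because either $\partial m'/\partial z_i=0$ or $m'/z_i$ is a genuine monomial and then the product is in $I_j$ as $m'/z_i\in I_j$ would need checking; here saturatedness is exactly what guarantees $m'/z_i\in I_j$ when $z_i\mid m'$. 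Symmetrically the second term lies in $I_i\,\partial/\partial z_i\subset\F$. Hence $[\F,\F]\subset\F$, i.e. $\F$ is integrable, and this descends to the statement for distributions on $\C^n$ as claimed.

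I expect the main obstacle to be the bookkeeping in the second and third steps: making precise that a $G$-invariant \emph{coherent} (not just graded) subsheaf really is a direct sum of monomial-generated pieces over all of $\C^n$ (one must rule out, e.g., subsheaves that are monomial near the origin but glued non-monomially elsewhere — this is where one uses that $G$-invariance forces the module of sections over any $G$-invariant affine open to be graded), and then verifying that the saturation condition forces the exponent sets to be closed under division by the relevant variables so that the bracket terms stay inside $\F$. The bracket computation itself is routine once the monomial-ideal structure is in hand.
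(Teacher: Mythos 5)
There is a genuine gap at the very first step, and the bracket computation that follows rests on it. The weight spaces of the $(\C^*)^n$-action on $T\C^n$ are \emph{not} one-dimensional: the monomial field $z^\alpha\,\partial/\partial z_i$ has weight $\alpha-e_i$ (up to sign convention), so distinct monomial fields can share a weight --- for instance $z_1\,\partial/\partial z_1,\dots,z_n\,\partial/\partial z_n$ all lie in the weight-zero space. Consequently a $G$-invariant saturated coherent subsheaf of $T\C^n$ need not be generated by monomial vector fields. Concretely, the rank-one subsheaf generated by the Euler field $\sum_i z_i\,\partial/\partial z_i$ is $G$-invariant and saturated but has no monomial generators; a higher-rank example is the subsheaf of $T\C^3$ generated by $z_1\,\partial/\partial z_1+z_2\,\partial/\partial z_2$ and $\partial/\partial z_3$. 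So your claimed normal form $\F=\bigoplus_i I_i\,\partial/\partial z_i$ with $I_i$ monomial ideals is false, and the bracket check only treats a proper subclass of the distributions covered by the proposition. What $G$-invariance actually gives is that the section module of $\F$ is graded by the weight lattice, with each graded piece an arbitrary subspace of an up-to-$n$-dimensional space spanned by $\{z^{\gamma+e_i}\,\partial/\partial z_i\}_i$; to complete your argument you would have to bracket general weight vectors $\sum_i c_i z^{\gamma+e_i}\,\partial/\partial z_i$, which is no longer the one-line monomial computation you wrote.

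For comparison, the paper sidesteps this entirely: it restricts to the open $G$-orbit $(\C^*)^n\subset\C^n$, where a $G$-invariant distribution is a left-invariant subbundle of $TG$ for the abelian group $G$, hence is framed by commuting left-invariant vector fields (constant-coefficient combinations of the $z_i\,\partial/\partial z_i$); the Frobenius obstruction therefore vanishes on a dense open set, and by continuity (it is a holomorphic tensor with values in a torsion-free sheaf) it vanishes identically. If you prefer to stay in your algebraic setup, the analogous repair is to pass to $\C[z_1^{\pm1},\dots,z_n^{\pm1}]$, where every weight vector becomes an invertible monomial times a weight-zero field and weight-zero fields commute --- but that is essentially the paper's proof in different clothing, and the localization/density step must still be made explicit.
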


 \proof
 Since the Frobenius form, representing the obstruction
 to integrability, is continuous, it would suffice to prove 
 \ref{integral} is general point of $\C^n$. Therefore,
 it would suffice to prove it on an open orbit of $G$,
 which is identified with $G$. A left-invariant distribution
 $B\subset TG$ is integrable. Indeed, $B$ is generated by
 left-invariant vector fields, which commute, because
 $G$ is commutative.
 \endproof

The integrability of the distribution  will follow from Proposition \ref{integral}  and the  
following theorem.

\hfill

\begin{teo}
Let $\F$ be a holomorphic distribution on a   no-resonance  Hopf
manifold  $X= \C^n -\{0\}/\langle  f \rangle$. Denote by $G$ the group $(\C^*)^n$ of diagonal matrices
commuting with $f$. Then $\F$ is $G$-invariant.
\end{teo}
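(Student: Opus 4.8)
The plan is to show that the pullback tangent sheaf $\pi^*T\F \subset T\C^n$ is preserved by the diagonal torus $G = (\C^*)^n$. The starting observation is that $\F$ is determined, via the exact sequence $0 \to T\F \to TX \to N_\F \to 0$, by a twisted $k$-form $\omega \in H^0(X, \Omega^k_X \otimes \det(N_\F))$, and $\det(N_\F)$ is some line bundle $L_b$ on the Hopf manifold by Ise--Mall. Hence the analysis of Lemma \ref{le1} and of the proof of Theorem \ref{teo1} applies verbatim: the pullback $\pi^*\omega$ is a polynomial $k$-form on $\C^n$ whose coefficients $g_{i_1\dots i_k}$ are, in the no-resonance case, monomials $c\cdot z_{j_1}^{m_{j_1}}\cdots z_{j_{n-k}}^{m_{j_{n-k}}}$ with every exponent $m_{j}\in\{0,1\}$. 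In particular each monomial component is a common eigenvector for the $G$-action on $H^0(\C^n,\Omega^k_{\C^n})$, where $t=(t_1,\dots,t_n)\in G$ acts on $z_i$ by $z_i \mapsto t_i z_i$ and on $dz_i$ by $dz_i \mapsto t_i\, dz_i$.

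First I would make this precise: $G$ acts linearly on $\C^n$ and hence on the space of polynomial $k$-forms, and the monomial forms $z^\alpha\, dz_{i_1}\wedge\cdots\wedge dz_{i_k}$ are exactly the weight vectors of this action. Since $\pi^*\omega = \sum_{i_1<\dots<i_k} g_{i_1\dots i_k}\, dz_{i_1}\wedge\cdots\wedge dz_{i_k}$ has each $g_{i_1\dots i_k}$ a single monomial (by the no-resonance case of Theorem \ref{teo1}), and all these monomial summands carry the \emph{same} weight — namely the one dictated by $b$, because they all come from the same cohomology class and equation \eqref{equa6} forces $\mu_1^{\alpha_1}\cdots\mu_n^{\alpha_n}\mu_{i_1}\cdots\mu_{i_k} = b$ — the form $\pi^*\omega$ is itself a $G$-eigenvector: $g^*\pi^*\omega = \chi(g)\,\pi^*\omega$ for a character $\chi\colon G \to \C^*$.

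Next I would conclude that the distribution defined by $\pi^*\omega$ is $G$-invariant. Indeed $\pi^*T\F = \{v \in T\C^n : i_v(\pi^*\omega) = 0\}$ away from the singular locus, and if $\pi^*\omega$ is a $G$-eigenvector then for $g \in G$ and $v$ with $i_v \pi^*\omega = 0$ we get $i_{g_* v}(\pi^*\omega) = i_{g_* v}(\chi(g)^{-1} g^*\pi^*\omega) = \chi(g)^{-1} g^*(i_v \pi^*\omega) = 0$, using the equivariance $i_{g_*v}(g^*\eta) = g^*(i_v \eta)$ of contraction under a linear automorphism. Hence $g_*$ preserves $\pi^*T\F$ on the complement of $\sing(\pi^*\omega)$, which has codimension $\geq 2$; since $\pi^*T\F$ is saturated (reflexive, being the kernel of a sheaf map to a torsion-free sheaf), the invariance extends across the singular set. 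Therefore $\F$ is $G$-invariant, and by Proposition \ref{integral} it is integrable, which also proves Theorem \ref{integrab}.

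The main obstacle — or rather the point requiring care — is the reduction to the form-theoretic description: one must check that a distribution (a saturated rank-$k$ subsheaf $\G \subset \Omega^1_X$, equivalently $T\F \subset TX$) genuinely gives rise to a twisted $k$-form lying in $H^0(X,\Omega^k_X \otimes L_b)$ with $L_b = \det \G$ a line bundle of the Ise--Mall type, so that Theorem \ref{teo1} is applicable; this is the content of Section \ref{Pfaff-sec} together with the fact that every line bundle on a Hopf manifold is some $L_b$. A secondary subtlety is that Theorem \ref{teo1}(ii) is stated for $k \leq n-2$ (the case $k=n-1$ being handled in \cite{aca}) and for $n \geq 3$; one should note that for $k = n-1$, or $k=n$, or $n=2$, the $G$-invariance is either trivial or already covered, so the argument is complete in all cases. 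Everything else is a formal unwinding of the eigenvector property of monomial forms under the diagonal torus.
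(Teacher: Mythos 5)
Your argument is essentially correct, but it follows a genuinely different route from the paper's. You deduce $G$-invariance from the classification: the twisted $k$-form $\omega\in H^0(X,\Omega^k_X\otimes L_b)$ attached to the distribution pulls back to a sum of monomial terms which, by equation \eqref{equa6} and the absence of multiplicative relations among the $\mu_i$, all have the same weight $\alpha+e_{i_1}+\cdots+e_{i_k}=(m_1,\dots,m_n)$ for the diagonal torus; hence $\pi^*\omega$ is a $G$-eigenvector and its kernel distribution is $G$-invariant on the complement of a codimension-$\ge 2$ set, which suffices by saturation. The paper instead argues softly: Step 1 shows that the no-resonance hypothesis forces the Zariski closure of $\langle f\rangle$ in $GL(n,\C)$ to be all of $G$ (no equation $\prod_i z_i^{n_i}=1$ can contain $f$); Step 2 encodes the distribution as an $f$-invariant ideal in $\mathcal{O}_{\C^{2n}}$ (the total space of $T\C^n$) and invokes the density of $f$-finite holomorphic functions (Ornea--Verbitsky) to conclude that any $f$-invariant ideal is invariant under the Zariski closure of $f$. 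Your approach buys explicitness (a monomial normal form for the distribution) and stays entirely inside the machinery already built in the paper; the paper's approach buys generality and robustness: it never passes through the Pfaff-system representation, so it is insensitive to the hypotheses $n\ge 3$ and $k\le n-2$ under which Theorem \ref{teo1} is actually proved, and the key lemma applies to arbitrary $f$-invariant analytic ideals, not just to distributions. Two small points to tidy in your write-up: the equivariance identity should read $i_v(g^*\eta)=g^*\bigl(i_{g_*v}\eta\bigr)$ rather than $i_{g_*v}(g^*\eta)=g^*(i_v\eta)$ (the conclusion that the kernel is preserved is unaffected), and the edge cases $k=n-1$ and $n=2$ that you wave at really do need a word --- e.g.\ for $k=n-1$ one repeats the weight-vector computation for a twisted vector field, since \cite{aca} treats only regular foliations --- whereas the paper's proof covers them uniformly.
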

\hfill

{\bf Proof. Step 1:}
Let $G_f$ be the Zariski closure of the group
$\langle f \rangle\cong \mathbb{Z}$. This is an algebraic group,
which is obtained as the smallest algebraic group containing
$\langle f \rangle$. This implies that $G_f\subset G$.
However, any algebraic subgroup of $G$ is given by
a set of equations of form $\prod_i z_i^{n_i}=1$,
hence none of them contains $\langle f \rangle$.
We have shown that $G_f=G$.

\hfill

{\bf Step 2:}
 Distributions on a manifold $X$ can be interpreted 
as subvarieties in  the total space $TX$ of a tangent bundle,
which are closed under the natural algebraic operations (addition
and multiplication by a number).

 Let $J\subset \mathcal{O}_{TX}$ be the
ideal corresponding to $\F$. Denote by $\tilde J\subset  \mathcal{O}_{\C^{2n}}$
the ideal $j_* \pi^*(J)$, where $\pi:\;  \C^n -\{0\}  \to X$
is the quotient map and $j:\; \ \C^n -\{0\}  \to \C^n$
the standard embedding. Then $\tilde J$ is a $f$-invariant
ideal sheaf in $ \mathcal{O}_{\C^{2n}}$. 

To finish the proof it remains to show
that any $\langle f \rangle$-invariant ideal in
$\mathcal{O}_{TM}$ is invariant with respect to the algebraic closure of
$\langle f \rangle$. 
This is implied by the following lemma,
which finishes the proof.
\endproof

\begin{lema}
Let $f: \C^n \to \C^n$ be a linear, invertible
holomorphic contraction,
and $I\subset \mathcal{O}_{\C^{2n}}$ a $f$-invariant ideal. Then
$I$ is $G$-invariant, where $G$ is Zariski closure of $f$
in $GL(n, \C)$.
\end{lema}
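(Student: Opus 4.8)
The plan is to prove the lemma by a Fourier-type decomposition of $\mathcal{O}_{\C^{2n}}$ into $f$-eigenspaces and to observe that an $f$-invariant ideal, being a closed subspace, must be a sum of its intersections with these eigenspaces, which then forces invariance under the whole diagonal torus $G$. Concretely, write coordinates on $\C^{2n}$ as $(z_1,\dots,z_n,w_1,\dots,w_n)$, where the $w_i$ are the fiber coordinates on $T\C^n$ dual to $\partial/\partial z_i$; since $f(z)=(\mu_1 z_1,\dots,\mu_n z_n)$ acts on $T\C^n$ by $df$, it sends $w_i\mapsto \mu_i^{-1} w_i$ (or $\mu_i w_i$ depending on the convention — in either case multiplicatively). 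Thus every monomial $z^\alpha w^\beta$ is an eigenvector of $f^*$ with eigenvalue $\lambda_{\alpha,\beta}=\prod_i \mu_i^{\alpha_i-\beta_i}$, and more importantly each such monomial is a simultaneous eigenvector of the full $G=(\C^*)^n$-action: if $t=(t_1,\dots,t_n)\in G$ acts diagonally, then $t^*(z^\alpha w^\beta)=\big(\prod_i t_i^{\alpha_i-\beta_i}\big) z^\alpha w^\beta$ (again up to the sign convention in the fiber).

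The key step is the following: since $G_f=G$ by Step 1, the characters $\chi_{\alpha,\beta}\colon G\to\C^*$, $t\mapsto \prod_i t_i^{\alpha_i-\beta_i}$, are \emph{separated} by $f$ in the sense that $\chi_{\alpha,\beta}(f)=\chi_{\alpha',\beta'}(f)$ implies $\chi_{\alpha,\beta}=\chi_{\alpha',\beta'}$ as characters of $G$; this is exactly the statement that no algebraic subgroup of $G$ other than $G$ itself contains $\langle f\rangle$, i.e.\ that $\alpha-\beta$ and $\alpha'-\beta'$ give the same monomial in the $\mu_i$ only if $\alpha-\beta=\alpha'-\beta'$, which is precisely the no-resonance hypothesis applied to the exponent vector $(\alpha-\beta)-(\alpha'-\beta')\in\Z^n$. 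I would then argue: let $h\in I$ and expand $h=\sum_{\alpha,\beta} c_{\alpha,\beta} z^\alpha w^\beta$. Group the terms by the value $\lambda=\prod_i\mu_i^{\alpha_i-\beta_i}$; by no-resonance this is the same as grouping by the character $\chi_{\alpha,\beta}$. Write $h=\sum_\lambda h_\lambda$, where $h_\lambda$ collects all monomials with $f^*$-eigenvalue $\lambda$. Since $I$ is $f$-invariant and closed (it is an ideal sheaf, and on each polydisk the $\lambda$-components are extracted by the standard averaging/contour-integral projector against the $S^1$-action $\theta\mapsto f^{\theta}$ — more carefully, by a Cesàro/limit argument over the iterates $f^N$ as in Mall's cohomology computation), each $h_\lambda\in I$. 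But $h_\lambda$ is a sum of monomials all transforming under $G$ by the \emph{same} character $\chi_{\alpha,\beta}=:\psi_\lambda$, so $t^*h_\lambda=\psi_\lambda(t)\,h_\lambda\in I$ for every $t\in G$. Summing, $t^*h=\sum_\lambda \psi_\lambda(t) h_\lambda\in I$, so $I$ is $G$-invariant.

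The main obstacle is making the ``$h_\lambda\in I$'' step rigorous: the decomposition $h=\sum_\lambda h_\lambda$ is a priori an infinite sum (a convergent power series grouped into infinitely many eigencomponents), and one must check that each eigencomponent genuinely lies in the ideal rather than merely in its closure. I would handle this exactly as in the proof of Theorem~\ref{teocoh} / Lemma~\ref{le1}: work on an invariant Stein exhaustion, note that $f$ is a contraction so the $\mu$-weights cluster and the eigenvalue-$\lambda$ pieces can be separated by a finite linear combination of the operators $b\cdot\mathrm{id}-f^*$ for suitable $b$ (the same $p_0$-type operators), and invoke that an ideal sheaf is closed under the resulting algebraic projectors; alternatively, restrict to the algebraic/polynomial situation first (every $f$-invariant ideal is generated by $f$-invariant polynomials after Hartogs extension, since the problem is local and $T\C^n$ is an affine space), where the decomposition is finite and the argument is purely formal, and then pass to the sheaf statement by localization. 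A secondary point to verify is the fiberwise sign convention ($w_i\mapsto\mu_i^{\pm1}w_i$), but this does not affect the argument since $G$ acts on the $w_i$ by $t_i^{\pm1}$ correspondingly and the no-resonance condition is symmetric under $\mu_i\leftrightarrow\mu_i^{-1}$ in the relevant sense — what matters is only that distinct exponent vectors in $\Z^n$ give distinct $G$-characters, which is the no-resonance hypothesis.
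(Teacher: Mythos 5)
Your overall strategy is in essence the same as the paper's: decompose sections of $I$ into $f$-finite pieces (in your case, explicit eigencomponents spanned by monomials), observe that a finite-dimensional $f$-invariant subspace is automatically invariant under the Zariski closure $G$ of $\langle f\rangle$, and conclude by density. The paper does this in one stroke by quoting Theorem 4.2 of \cite{_OV:embedding_RS_} (a Riesz--Schauder argument: $f^*$ acts as a compact operator on the holomorphic functions of a suitable Stein neighbourhood, so its spectral projectors have finite rank, are limits of polynomials in $f^*$, and therefore preserve every closed $f^*$-invariant subspace), together with the fact that $\Gamma(U,I)$ is closed in the Fr\'echet topology because $I$ is coherent. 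Your version is more concrete but uses the diagonal form of $f$ and the no-resonance hypothesis, neither of which the lemma as stated assumes (though both hold where it is applied); note also that no-resonance is not actually needed even in your set-up, since each eigenspace $V_\lambda$ is finite-dimensional, spanned by monomials, and $G$-stable, so $I\cap V_\lambda$ is $G$-invariant by the Zariski-closure argument whether or not $\lambda$ determines a single character.

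The genuine gap is exactly where you place it: the claim $h_\lambda\in I$. Neither of your proposed repairs works as written. A \emph{finite} linear combination or product of operators $b\cdot\mathrm{id}-f^*$ annihilates only finitely many eigencomponents, whereas you must kill all but one of infinitely many; so the projector onto $V_\lambda$ is unavoidably a limit of polynomials in $f^*$, and you must then know both that this limit converges and that $\Gamma(U,I)$ is stable under it. The stability is true (section spaces of coherent subsheaves are closed), but it has to be invoked, and the convergence of the spectral projectors is precisely the content of the Riesz--Schauder theorem the paper cites --- it is not automatic for an arbitrary endomorphism of a Fr\'echet space and it uses that $f$ is a contraction. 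Here your sign convention matters more than you allow: with the correct transformation law $w_i\mapsto\mu_i w_i$ on the tangent fibres each eigenvalue $\prod_i\mu_i^{\alpha_i+\beta_i}$ occurs with finite multiplicity, while with $\alpha_i-\beta_i$ the eigenspaces would be infinite-dimensional and the finite-rank projector argument would break. Your alternative fix, that every $f$-invariant ideal is ``generated by $f$-invariant polynomials after Hartogs extension,'' is unsubstantiated and is essentially a restatement of what is to be proved. So the skeleton of your argument is right, but closing it requires the compactness/spectral-projection input that the paper outsources to \cite{_OV:embedding_RS_}.
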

\proof
Let us call a holomorphic function on $\C^{n}$
{\bf $f$-finite} if it is contained in finite-dimensional,
$f$-invariant subspace in $H^0(\C^{2n}, \mathcal{O}_{\C^{2n}})$.
As follows from \cite[Theorem 4.2]{_OV:embedding_RS_},
$f$-finite functions are dense in the space of all
holomorphic functions on $\C^n$. 

The same argument also
proves that the $f$-finite sections of
$I$ are dense in the space of all sections of $I$.
However, by definition of Zariski closure, any
finite-dimensional $f$-invariant subspace is also
$G$-invariant, hence $I$ is also $G$-invariant.

\endproof

\end{document}